\newtheorem{proposition}{Proposition}[section]
\newtheorem{theorem}{Theorem}[section]
\newtheorem{remark}{Remark}[section]
\newcommand{\n}{\noindent}
\begin{document}
	\title[Automorphisms of semidirect products]{Automorphisms of semidirect products fixing the non-normal subgroup}
	\author{Ratan Lal and Vipul Kakkar}
	\address{Department of Mathematics, Central University of Rajasthan, Rajasthan}
	\email{vermarattan789@gmail.com, vplkakkar@gmail.com}
	\date{}
	
		\begin{abstract}
	\n 	In this paper, we describe the automorphism group of semidirect product of two groups that fixes the non-normal subgroup of it. We have computed these automorphisms for the non-abelian metacyclic $p$-group and non-abelian $p$-groups $(p\ge 5)$ of order $p^{4}$, where $p$ is a prime.
	\end{abstract}
	\maketitle

	\textbf{Keywords:} {Automorphism Group; Semidirect Product; $p$-group.}
	
	\textbf{2020 Mathematics Subject Classification:} 20D45, 20E36
	
	\section{Introduction}
	\n	Let $K$ be a non-normal subgroup of a group $G$. Let $S$ be a right transversal to $K$ in $G$ with $1\in S$. Then the group operation on $G$ induces a binary operation on $S$ with respect to it $S$ becomes a right loop. Let $Aut_K(G)=\{\theta\in Aut(G)\mid \theta(K)=K\}$. In \cite[Theorem 2.6, p. 73]{rl}, R. Lal obtained that $\theta \in Aut_K(G)$ can be identified with the triple $(\alpha, \gamma, \delta)$, where $\alpha \in Map(S, S)$, $\gamma \in Map(S, K)$ and $\delta \in Aut(K)$ satisfying the conditions in \cite[Definition 2.5, p. 73]{rl}.
	
	\vspace{0.2 cm}
	
\n In case, if there is a right transversal $H$ to $K$ in $G$ which is a normal subgroup of $G$, then $G$ is the semidirect product of $K$ and $H$. In this case, the conditions on $\alpha, \gamma$ and $\delta$ agrees with the conditions given in \cite[Lemma 1.1, p. 1000]{nch}. These conditions are given as follows.

	\begin{itemize}
		\item[($C1$)] $\alpha(hh^{\prime}) = \alpha(h)\alpha(h^{\prime})^{\gamma(h)}$,
		\item[($C2$)] $\gamma(h^{k}) = \gamma(h)^{\delta(k)}$,
		\item[$(C3)$] $\alpha(h^{k}) = \alpha(h)^{\delta(k)}$, 
		\item[($C4$)] For any $h^{\prime}k^{\prime}\in G$, there exists a unique $hk\in G$ such that $\alpha(h)=h^{\prime}$ and $\gamma(h)\delta(k) = k^{\prime}$.
	\end{itemize} 
	
	\begin{remark}
	In \cite{rl}, the author put the non-normal subgroup in the left in the factorization of $G$. To match the terminology with that in \cite{crn2008}, we put the non-normal subgroup $K$ in the right, that is $G=HK$. Through out the paper, we will use the terminology used in \cite{crn2008}. We will identify the internal semidirect product $G=HK$ with the external semidirect product $H \rtimes_{\phi} K$, where $\phi:K\rightarrow Aut(H)$ is a homomorphism.
	\end{remark}
	
	\n M . J. Curran \cite{crn2008} and D. Jill \cite{dj2005} studied the automorphism group of the group $G$ that fixes the normal subgroup $H$. In this paper, we study the automorphism group of the group $G$ that fixes the non-normal subgroup $K$. We have computed the automorphism group, $Aut_{K}(G)$ in case of non-abelian metacyclic $p$-groups and $p$-groups $(p\ge 5)$ of order $p^{4}$, where $p$ is a prime.

%%%%%%%%%%%%%%%%%%%%%%%%%%%%%%%%%%%%%%%%%%%%%%%%%%%%%%%%%%%%%%%%%%%%%%%%%%%%%%%
	
\section{Structure of the automorphism group $Aut_{K}(G)$} 

Let $	\alpha \in Map(H, H),  \gamma \in Hom(H, K) $ and $\delta \in Aut(K)$. Consider a set	
	\[\mathcal{\hat{M}}_K = \left\{	\begin{pmatrix}
	\alpha & 0\\
	\gamma & \delta
	\end{pmatrix} |\alpha,\gamma, \delta \text{ satisfy the conditions } (C1)-(C4)\right\}.\]
	
	\n Then $\mathcal{\hat{M}}_K$ is a group with the usual multiplication of matrices defined as,
	\begin{equation*}
	\begin{pmatrix}
	\alpha & 0\\
	\gamma & \delta
	\end{pmatrix} \begin{pmatrix}
	\alpha^{\prime} &  0\\
	\gamma^{\prime} &  \delta^{\prime}
	\end{pmatrix} = \begin{pmatrix}
	\alpha \alpha^{\prime} &  0\\
	\gamma\alpha^{\prime} + \delta\gamma^{\prime} &  \delta \delta^{\prime}
	\end{pmatrix},
	\end{equation*}
	where $\alpha \alpha^{\prime} , \delta \delta^{\prime}$ are the usual composition of maps and $(\gamma\alpha^{\prime} + \delta\gamma^{\prime})(h) = \gamma\alpha^{\prime}(h)\delta\gamma^{\prime}(h)$. Clearly, $Aut_{K}(G)$ is a subgroup of $Aut(G)$.   
	\begin{proposition}
		The group $Aut_{K}(G)$ is isomorphic to the group $\mathcal{\hat{M}}_K$.	
	\end{proposition}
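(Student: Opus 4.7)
The plan is to invoke the bijection between $Aut_{K}(G)$ and triples $(\alpha,\gamma,\delta)$ satisfying $(C1)$--$(C4)$ that is already recorded in the introduction, and then to show that the natural map packaging such a triple into a matrix is a group homomorphism. Concretely, given $\theta\in Aut_{K}(G)$, use the unique factorization $G=HK$ to write, for each $h\in H$, $\theta(h)=\alpha(h)\gamma(h)$ with $\alpha(h)\in H$ and $\gamma(h)\in K$, and set $\delta=\theta|_{K}\in Aut(K)$. By \cite[Theorem 2.6]{rl}, specialized as in the paragraph preceding $(C1)$--$(C4)$ to the semidirect product setting, the triple $(\alpha,\gamma,\delta)$ satisfies $(C1)$--$(C4)$, and the assignment $\theta\mapsto(\alpha,\gamma,\delta)$ is a bijection. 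I would then define
\[
\Psi:Aut_{K}(G)\longrightarrow\hat{\mathcal{M}}_{K},\qquad \theta\longmapsto\begin{pmatrix}\alpha & 0\\ \gamma & \delta\end{pmatrix},
\]
which is a bijection of sets by the above; the only content left is to verify that $\Psi$ is multiplicative.

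The key computation is $\Psi(\theta_{1}\theta_{2})=\Psi(\theta_{1})\Psi(\theta_{2})$. For $h\in H$, I would expand
\[
\theta_{1}\theta_{2}(h)=\theta_{1}\bigl(\alpha_{2}(h)\gamma_{2}(h)\bigr)=\theta_{1}(\alpha_{2}(h))\,\theta_{1}(\gamma_{2}(h))=\alpha_{1}(\alpha_{2}(h))\,\gamma_{1}(\alpha_{2}(h))\,\delta_{1}(\gamma_{2}(h)),
\]
using that $\theta_{1}$ is a homomorphism with $\alpha_{2}(h)\in H$, $\gamma_{2}(h)\in K$ and $\theta_{1}|_{K}=\delta_{1}$. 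Since both $\gamma_{1}(\alpha_{2}(h))$ and $\delta_{1}(\gamma_{2}(h))$ lie in $K$, their product stays in $K$; hence by the uniqueness of the factorization $G=HK$, the $H$-part of $\theta_{1}\theta_{2}(h)$ is $(\alpha_{1}\alpha_{2})(h)$ and the $K$-part is $(\gamma_{1}\alpha_{2}+\delta_{1}\gamma_{2})(h)$, matching exactly the row of the matrix product defined in the text. For $k\in K$, one immediately has $\theta_{1}\theta_{2}(k)=\theta_{1}(\delta_{2}(k))=(\delta_{1}\delta_{2})(k)$. Assembling these two pieces yields the matrix product verbatim.

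The main (really the only) subtlety is the step in which the two successive $K$-factors $\gamma_{1}(\alpha_{2}(h))$ and $\delta_{1}(\gamma_{2}(h))$ are grouped into a single element of $K$: this works precisely because both are already in $K$, so no twisting by the action $\phi:K\to Aut(H)$ is required to restore the canonical form $hk$. Granting this, the identity automorphism corresponds to the identity matrix, so $\Psi$ is a bijective group homomorphism and therefore an isomorphism $Aut_{K}(G)\cong\hat{\mathcal{M}}_{K}$.
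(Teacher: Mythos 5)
Your proposal is correct and follows essentially the same route as the paper: the paper defines the identical map $\theta\mapsto\begin{pmatrix}\alpha & 0\\ \gamma & \delta\end{pmatrix}$ via $\theta(h)=\alpha(h)\gamma(h)$, $\delta=\theta|_{K}$, and then simply cites the argument of Curran \cite[Theorem 1, p.~206]{crn2008} for the verification that it is an isomorphism. Your explicit computation of $\theta_{1}\theta_{2}(h)=\alpha_{1}\alpha_{2}(h)\,\gamma_{1}(\alpha_{2}(h))\,\delta_{1}(\gamma_{2}(h))$, using uniqueness of the $HK$-factorization, is exactly the content of that cited argument, so you have merely written out what the paper leaves to the reference.
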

	\begin{proof}
	Define a map $\psi: Aut_{K}(G)\longrightarrow \mathcal{\hat{M}}_K$ by $\psi(\theta) = \begin{pmatrix}
	\alpha &  0\\
	\gamma &  \delta
	\end{pmatrix}$, for all $\theta\in Aut_{K}(G)$, where $\alpha: H\longrightarrow H, \gamma: H \longrightarrow K$ and $\delta: K\longrightarrow K$ are defined as, $\theta(h) = \alpha(h)\gamma(h)$ and $\delta(k)=\theta(k)|_K$ for all $h\in H$ and $k\in K$. By the similar arguement as in \cite[Theorem 1, p. 206]{crn2008}, one can observe that $\psi$ is an isomorphism.
	\end{proof}
	
	\n From now on we will identify automorphisms in $Aut_{K}(G)$ with the matrices in $\mathcal{\hat{M}}_K$. Now, we have the following remark.
	
	\begin{remark}
		\begin{itemize}
			\item[$(i)$] $\begin{pmatrix}
			\alpha &  0\\
			0 & 1
			\end{pmatrix}\in Aut_{K}(G)$ if and only if $\alpha\in Aut(H)$ and $\alpha(h^{k}) = {\alpha(h)}^{k}$ for all $h\in H$ and $k\in K$. 
			
			\item[($ii$)] $\begin{pmatrix}
			1 &  0\\
			\gamma &  1
			\end{pmatrix}\in Aut_{K}(G)$ if and only if $\gamma(H)\subseteq C_{K}(H)$ and $\gamma(h^{k}) = {\gamma(h)}^{k}$, for all $h\in H$ and $k\in K$, where $C_{K}(H) = \{k\in K \;| \;h^{k} = h,\; \forall\; h\in H\}$ is the centralizer of $H$ in $K$.
			
			\item[($iii$)] $\begin{pmatrix}
			1 & 0\\
			0 & \delta
			\end{pmatrix}\in Aut_{K}(G)$ if and only if $k^{-1}\delta(k)\in C_{K}(H)$ for all $k\in K$. 
			
		\end{itemize}
	\end{remark}
	\n Now, let us consider the following subsets of $Aut(H), Aut(K)$ and $Aut(H)\times Aut(K)$,  
	\begin{eqnarray*}
		U &=& \{\alpha\in Aut(H)\; |\; \alpha(h^{k}) = {\alpha(h)}^{k}, \forall h\in H, k\in K \},\\
		V &=& \{ \delta \in Aut(K)\; |\; k^{-1}\delta(k)\in C_{K}(H), \forall k\in K\},\\
		W &=& \{(\alpha, \delta)\in Aut(H)\times Aut(K)\; |\; \alpha(h^{k}) = {\alpha(h)}^{\delta(k)}, \forall h\in H, k\in K\}.
	\end{eqnarray*}
	
	\n	Clearly, $U, V$ and $W$ are the subgroups of $Aut(H), Aut(K)$, and $Aut(H) \times Aut(K)$, respectively. The corresponding subgroups of the group $Aut_{K}(G)$ are
	
	$A = \left\{\begin{pmatrix}
	\alpha & 0\\
	0 & 1
	\end{pmatrix} |\; \alpha \in U\right\}, D = \left\{\begin{pmatrix}
	1 & 0\\
	0 & \delta
	\end{pmatrix} |\; \delta \in V\right\}$,\\ and $E = \left\{\begin{pmatrix}
	\alpha & 0\\
	0 & \delta
	\end{pmatrix} |\; (\alpha, \delta)\in W\right\}$. Note that, if $\alpha\in U$ and $\delta\in V$, then $(\alpha, \delta)\in W$. Therefore, $U\times V \le W$.
	
	\vspace{.2cm} %On the other hand, we have 
%	\begin{theorem}\cite[Theorem 2, p. 207]{crn2008}\label{t1}
%		If $U = Aut(H)$ or $V = Aut(K)$, then $W = U \times V$, that is, $E = A\times D$.
%	\end{theorem} 

	\n Clearly, $E$ is a subgroup of $Aut_{K}(G)$. However, one can check that $E$ need not be a normal subgroup of $Aut_{K}(G)$. Let 
	\begin{multline*}
	C = \left\{\begin{pmatrix}
	1 & 0\\
	\gamma & 1
	\end{pmatrix}\in Aut_{K}(G) \;|\; \gamma(H)\subseteq C_{K}(H) \;\text{and}\right.\\ \left. \gamma(h^{k}) = {\gamma(h)}^{k}, \;\text{for all}\; h\in H \;\text{and}\; k\in K\right\}.
	\end{multline*} 
	One can easily observe that $C$ is a normal subgroup of the group $Aut_{K}(G)$. Clearly, $E\cap C = 1$. Now, let $\begin{pmatrix}
	\alpha & 0\\
	\gamma & \delta
	\end{pmatrix}\in Aut_{K}(G)$. 
	Then,
	\begin{equation*}
	\begin{pmatrix}
	\alpha & 0\\
	\gamma & \delta
	\end{pmatrix} = \begin{pmatrix}
	1 & 0\\
	\gamma\alpha^{-1} & 1
	\end{pmatrix} \begin{pmatrix}
	\alpha & 0\\
	0 & \delta
	\end{pmatrix}.
	\end{equation*}	
	Hence, $Aut_{K}(G) = CE, C\trianglelefteq Aut_{K}(G), E\le Aut_{K}(G)$, and $C\cap E = 1$. Thus, we have proved the following theorem,
\begin{theorem}\label{t2}
	Let $G = H \rtimes K$ be the semidirect product. Then $Aut_{K}(G) \simeq C \rtimes E$.
\end{theorem}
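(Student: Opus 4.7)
The plan is to recognize $Aut_K(G)$, identified with $\hat{\mathcal{M}}_K$ via the preceding proposition, as an internal semidirect product $C \rtimes E$. This reduces to verifying the four standard conditions: $C \trianglelefteq Aut_K(G)$, $E \le Aut_K(G)$, $C \cap E = 1$, and $Aut_K(G) = CE$. Most of the ingredients have already been collected in the paragraphs preceding the theorem, so the proof is largely a matter of assembling them and filling in the verifications left implicit.

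First I would confirm that $E$ is a subgroup: for $\begin{pmatrix}\alpha & 0\\ 0 & \delta\end{pmatrix}$ with $(\alpha,\delta)\in W$, condition $(C1)$ collapses to $\alpha$ being a homomorphism, $(C2)$ is vacuous since $\gamma=0$, $(C3)$ is exactly the defining relation of $W$, and $(C4)$ reduces to the bijectivity of $\alpha$ and $\delta$; closure under the matrix product is then immediate. Next, I would check normality of $C$ by a direct matrix computation, conjugating $\begin{pmatrix}1 & 0\\ \gamma & 1\end{pmatrix}$ by an arbitrary element of $Aut_K(G)$ via the product formula and verifying that the resulting $(2,1)$-entry still takes values in $C_K(H)$ and remains $K$-equivariant. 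The intersection $C \cap E = 1$ is then immediate from comparing matrix shapes.

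The crux is the factorization
\[
\begin{pmatrix}\alpha & 0\\ \gamma & \delta\end{pmatrix} = \begin{pmatrix}1 & 0\\ \gamma\alpha^{-1} & 1\end{pmatrix}\begin{pmatrix}\alpha & 0\\ 0 & \delta\end{pmatrix}
\]
displayed in the excerpt, which delivers $Aut_K(G) = CE$ provided the two factors genuinely lie in $C$ and $E$ respectively. This is the main, and only non-formal, step. I would unpack $(C1)$--$(C4)$ for the original triple $(\alpha,\gamma,\delta)$ to deduce, on the one hand, that $(\alpha,\delta)\in W$ (immediate from $(C3)$, placing the right-hand factor in $E$) and, on the other hand, that $\gamma\alpha^{-1}$ has image in $C_K(H)$ and satisfies $\gamma\alpha^{-1}(h^k) = (\gamma\alpha^{-1}(h))^k$ (obtained by combining $(C1)$, $(C2)$, and the bijectivity of $\alpha$, placing the left-hand factor in $C$). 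Once both memberships are established, the four internal semidirect product axioms are in place and the isomorphism $Aut_K(G) \simeq C \rtimes E$ follows at once.
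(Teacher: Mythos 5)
Your outline is indeed the paper's own argument --- the paper's ``proof'' is precisely the discussion preceding the theorem (normality of $C$, $C\cap E=1$, and the displayed factorization) --- and you correctly identify the one non-formal step: that the two factors actually lie in $C$ and in $E$. But your claimed verification of that step does not work, and the gap is genuine, both in your proposal and in the paper. Membership of $\begin{pmatrix}\alpha & 0\\ 0 & \delta\end{pmatrix}$ in $E$ is \emph{not} immediate from $(C3)$: $W$ requires $\alpha\in Aut(H)$, while $(C1)$ only gives $\alpha(hh')=\alpha(h)\alpha(h')^{\gamma(h)}$. Since $\alpha$ is onto by $(C4)$, $\alpha$ is multiplicative if and only if $\gamma(h)$ centralizes $H$ for every $h$, i.e.\ if and only if $\gamma(H)\subseteq C_{K}(H)$ --- which is exactly the same condition needed to place $\begin{pmatrix}1 & 0\\ \gamma\alpha^{-1} & 1\end{pmatrix}$ in $C$ (the $K$-equivariance half of membership in $C$ does follow from $(C2)$ and $(C3)$, but the containment $\gamma\alpha^{-1}(H)=\gamma(H)\subseteq C_K(H)$ does not). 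So the two memberships you need are equivalent to one extra condition that is not a consequence of $(C1)$--$(C4)$, and no combination of those axioms will produce it.

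In fact no argument can close this gap, because $Aut_K(G)=CE$ is false in general --- already inside the paper's own metacyclic family. Take $m=2$, $n=1$, $r=1$: $G=\langle a,b \mid a^{p^2}=b^p=1,\ a^b=a^{1+p}\rangle$, $H=\langle a\rangle$, $K=\langle b\rangle$, so $C_K(H)=\langle b^p\rangle=1$ and hence $C=1$. The assignment $\theta(a)=ab$, $\theta(b)=b$ preserves the defining relations (e.g.\ $b(ab)b^{-1}=a^{1+p}b=(ab)^{1+p}$, using $(ab)^{s}=a^{s+p\binom{s}{2}}b^{s}$ and $p$ odd) and maps generators to generators, so $\theta\in Aut_K(G)$; but $\theta(H)\neq H$, so $\theta\notin E$, whence $\theta\notin CE$. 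Concretely, here $\gamma(a)=b\notin C_K(H)$ and $\alpha(a^{i})=a^{i+p\binom{i}{2}}$ is not an endomorphism of $H$, so neither factor of the displayed factorization is an automorphism of $G$: the matrix identity is formally correct, but it is a factorization inside $Map(H,H)\times Hom(H,K)\times Aut(K)$, not inside $Aut_K(G)$. (The same phenomenon occurs for the extraspecial group of order $p^{3}$ and exponent $p$ with $\theta(x)=xy$, $\theta(y)=y$.) A correct statement needs the additional hypothesis that every $\theta\in Aut_K(G)$ satisfies $\gamma(H)\subseteq C_K(H)$ --- equivalently that each such $\alpha$ lies in $Aut(H)$ --- and under that hypothesis your assembly of the four internal semidirect product conditions goes through verbatim; without it, both your proof and the paper's prove only $CE\subseteq Aut_K(G)$, with $C\rtimes E$ a subgroup rather than the whole group.
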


%%%%%%%%%%%%%%%%%%%%%%%%%%%%%%%%%%%%%%%%%%%%%%%%%%%%%%%
	\section{Computation of $Aut_K(G)$ for some groups}
		
\n	In this section, we will compute the automorphism group $Aut_{K}(G)$ for non-abelian metacyclic $p$-groups and $p$-groups $(p\ge 5)$ of order $p^{4}$, where $p$ is a prime. $\mathbb{Z}_m$ will denote the cyclic group of order $m$.

	\begin{center}
	\textbf{Metacyclic $p$-groups} 
	\end{center}
	First, assume that $p$ is odd. A non-abelian split metacyclic $p$-group $G$ is of the form $G= \langle a,b \;|\; a^{p^{m}} = 1 = b^{p^{n}}, a^{b} = a^{1+p^{m-r}}\rangle$, where $m\ge 2, n \ge 1$, and $1\le r \le \text{min}\{m-1, n\}$. Let  $H = \langle a\rangle$, $K = \langle b \rangle$ and $\phi: K\longrightarrow Aut(H)$ be defined by $\phi(b)(a) = a^{1+p^{m-r}}$. Then $G=H\rtimes_\phi K$.  
			
			\vspace{.2cm}
			
	\n Note that $[H,K] = \langle a^{p^{m-r}}\rangle \simeq \mathbb{Z}_{p^{r}}$. Since $K$ is abelian, by \cite[Corollary 2.2, p. 490]{bdw2006}, $\gamma(h^k)=\gamma(h)$ is equivalent to $\gamma \in Hom(H/[H,K], K)$. Define $\gamma_{i}:H\rightarrow K$ by $\gamma_{i}(a) = b^{i}, 1\le i \le p^{n}$ when $m-r \ge n$ and by $\gamma_{i}(a) = b^{ip^{n-m+r}} , 1\le i\le p^{m-r}$ when $m-r < n$. Since $[H,K]\subseteq Ker \gamma_i$, it will induce a homomorphism from $H/[H,K]$ to $K$. Let $c =\begin{pmatrix}
	1 & 0\\
	\gamma_{1} & 1
	\end{pmatrix}$. Then, one can easily observe that $\gamma_{1}(H)\subseteq C_{K}(H)$. Therefore, $Hom(H/[H,K], K) \simeq C = \langle c\rangle \simeq \mathbb{Z}_{p^{\text{min}\{m-r,n\}}}$. Also, $C_{K}(H) = \langle b^{p^{r}}\rangle\simeq \mathbb{Z}_{p^{n-r}}$ and for $b\in K$, $b^{-1}\delta(b)\in C_{K}(H)$. Therefore, there are $p^{n-r}$ choices for $\delta(b)$. If $\delta_{1}(b) = b^{1+p^{r}}$, then $V = \langle \delta_{1} \rangle \simeq \mathbb{Z}_{p^{n-r}}$ and so, $D\simeq \mathbb{Z}_{p^{n-r}}$. Now, for all $\alpha \in Aut(H)$, $\alpha(a^{b}) = \alpha(a^{1+p^{m-r}}) = \alpha(a)^{1+p^{m-r}} = \alpha(a)^{b}$. Therefore, $U = Aut(H)\simeq \mathbb{Z}_{p^{m-1}(p-1)}$ and so, $A\simeq \mathbb{Z}_{p^{m-1}(p-1)}$. Then, by Theorem \cite[Theorem 2, p. 207]{crn2008}, $E = A\times D$. Now, by Theorem \ref{t2}, $Aut_{K}(G)\simeq \mathbb{Z}_{p^{\text{min}\{m-r,n\}}}\rtimes (\mathbb{Z}_{p^{m-1}(p-1)} \times \mathbb{Z}_{p^{n-r}})$. Hence, $Aut_{K}(G)$ is a subgroup of index $p^{\text{min}\{m,n\}}$ in the group $Aut(G)$. 
		
		\vspace{.2cm}
		
		\n 	Now, assume $p = 2$. Then, as given in \cite{crn2007}, the non-abelian split metacyclic 2-group is one of the following three forms,
		
		\begin{itemize}
			\item[($i$)] $G = \langle a,b \;|\; a^{2^{m}} = 1 = b^{2^{n}}, a^{b} = a^{1 + 2^{m-r}}\rangle, 1\le r \le \text{min}\{m-2, n\}, m\ge 3, n\ge 1$.
			\item[($ii$)]  $G = \langle a,b \;|\; a^{2^{m}} = 1 = b^{2^{n}}, a^{b} = a^{-1 + 2^{m-r}}\rangle, 1\le r \le \text{min}\{m-2, n\}, m\ge 3, n\ge 1$.
			\item[($iii$)] $G = \langle a,b \;|\; a^{2^{m}} = 1 = b^{2^{n}}, a^{b} = a^{-1}\rangle, m\ge 2, n\ge 1$.
		\end{itemize}
		
		\n Let $H = \langle a\rangle \simeq \mathbb{Z}_{2^{m}}$ and $K = \langle b \rangle \simeq \mathbb{Z}_{2^{n}}$. We will compute the automorphism group, $Aut_{K}(G)$ in the above three cases $(i) - (iii)$. Using the similar argument as for odd prime $p$ above, in the $case(i), [H,K] = \langle a^{2^{m-r}}\rangle \simeq \mathbb{Z}_{2^{r}}$ and $C_{K}(H) = \langle b^{2^{r}}\rangle \simeq \mathbb{Z}_{2^{n-r}}$. Then $Hom(H/[H,K], K) \simeq \mathbb{Z}_{2^{\text{min}\{m-r, n\}}}$. Thus $C \simeq \mathbb{Z}_{2^{\text{min}\{m-r, n\}}}$, $A \simeq \mathbb{Z}_{2}\times \mathbb{Z}_{2^{m-2}}$ and $D \simeq \mathbb{Z}_{2}\times \mathbb{Z}_{2^{n-r-1}}$. Hence, $Aut_{K}(G) \simeq \mathbb{Z}_{2^{\text{min}\{m-r, n\}}} \rtimes (\mathbb{Z}_{2}\times \mathbb{Z}_{2^{m-2}}\times \mathbb{Z}_{2}\times \mathbb{Z}_{2^{n-r-1}})$.  
		
		\vspace{.2cm}
		
		\n		In the case $(ii)$, $[H,K] = \langle a^{2}\rangle \simeq \mathbb{Z}_{2^{m-1}}$ and $C_{K}(H) = \langle b^{2^{r}}\rangle \simeq \mathbb{Z}_{2^{n-r}}$. Thus, $C\simeq \mathbb{Z}_{2}, A \simeq \mathbb{Z}_{2}\times \mathbb{Z}_{2^{m-2}}$ and $D \simeq \mathbb{Z}_{2}\times \mathbb{Z}_{2^{n-r-1}}$. Hence, $Aut_{K}(G) \simeq \mathbb{Z}_{2} \times (\mathbb{Z}_{2}\times \mathbb{Z}_{2^{m-2}}\times \mathbb{Z}_{2}\times \mathbb{Z}_{2^{n-r-1}})$. Similarly, in the $case (iii)$, $[H,K] = \langle a^{2}\rangle \simeq \mathbb{Z}_{2^{m-r}}$, and $C_{K}(H) = \langle b^{2}\rangle \simeq \mathbb{Z}_{2^{n-1}}$. Thus, $C\simeq \mathbb{Z}_{2}, A\simeq \mathbb{Z}_{2}\times \mathbb{Z}_{2^{m-2}}$ and $D \simeq \mathbb{Z}_{2}\times \mathbb{Z}_{2^{n-r-1}}$. Hence, $Aut_{K}(G) \simeq \mathbb{Z}_{2} \times (\mathbb{Z}_{2}\times \mathbb{Z}_{2^{m-2}}\times \mathbb{Z}_{2}\times \mathbb{Z}_{2^{n-2}})$.

\vspace{.2cm}

\begin{center}
\textbf{Non-abelian $p$-groups of order $p^4$ ($p\geq 5$)}
\end{center}

\n Burnside in \cite{wb} classified $p$-groups of order $p^{4}$, where $p$ is a prime. Below, we list $10$ non-abelian $p$-groups ($p\geq 5$) of order $p^4$ up to isomorphism.

\begin{enumerate}
	\item[(i)] $G_1 = \langle a,b\; |\; a^{p^{3}} = 1 = b^{p}, a^{b} = a^{1+p^{2}} \rangle$,
	\item[(ii)] $G_2= \langle a,b \;|\; a^{p^{2}} = 1 = b^{p^{2}}, a^{b} = a^{1+p} \rangle$,
	\item[(iii)] $G_3 = \langle a,b,c\; |\; a^{p^{2}} = 1 = b^{p} = c^{p}, cb = a^{p}bc, ab = ba, ac = ca\rangle$,
	\item[(iv)] $G_4 = \langle a,b,c \;|\; a^{p^{2}} = 1 = b^{p} = c^{p}, ca = a^{1+p}c, ab = ba, cb = bc\rangle$,
	\item[(v)] $G_5 = \langle a,b,c \;|\; a^{p^{2}} = 1 = b^{p} = c^{p}, ca = abc, ab = ba, bc = cb\rangle$,
	\item[(vi)] $G_6 = \langle a,b,c \;|\; a^{p^{2}} = 1 = b^{p} = c^{p}, ba = a^{1+p}b, ca = abc, bc = cb\rangle$,
	\item[(vii)] $G_7 = \langle a,b,c \;|\; a^{p^{2}} = 1 = b^{p} = c^{p}, ba = a^{1+p}b, ca = a^{1+p}bc, cb = a^{p}bc\rangle$,
	\item[(viii)] $G_8 = \langle a,b,c \;|\; a^{p^{2}} = 1 = b^{p} = c^{p}, ba = a^{1+p}b, ca = a^{1+dp}bc, cb = a^{dp}bc, d \not\equiv 0,1 \;(\mod\; p)\rangle$,
	\item[(ix)] $G_9 = \langle a,b,c,d \;|\; a^{p}= b^{p}= c^{p} = d^{p} =1, dc = acd, bd = db, ad = da, bc = cb, ac = ca, ab = ba\rangle$,
	\item[(x)] $G_{10} = \langle a,b,c,d \;|\; a^{p}= b^{p}= c^{p} = d^{p} =1, dc = bcd, db = abd, ad = da, bc = cb, ac = ca, ab = ba\rangle$.
\end{enumerate}

\n Observe that $G_1$ and $G_2$ are metacyclic  $p$-groups. $Aut_K(G_1)$ and $Aut_K(G_2)$ (for the corresponding $K$) can be calculated by the above discussion.

\vspace{0.2 cm}

\n \textbf{The group $G_3$}.  Let $H = \langle a,b \;|\; a^{p^{2}} = b^{p} = 1, ab = ba\rangle$ and $K=\langle c \;|\; c^{p} = 1\rangle$. Then $G_3 \simeq H\rtimes_{\phi} K$, where $\phi: K \longrightarrow Aut(H)$ is given by $\phi(c)(a) = a$ and $\phi(c)(b) = a^{p}b$. Note that $[a^{u}b^{v},c] = (a^{u}b^{v})c(a^{u}b^{v})^{-1}c^{-1} = a^{u}b^{v}(a^{u+pv}b^{v})^{-1} = a^{-pv}$. Therefore, $[H,K] = \langle a^{p}\rangle \simeq \mathbb{Z}_{p}$. Also, if $c^{s}\in C_{K}(H)$, then $a^{i}b^{j} = c^{s}a^{i}b^{j}c^{-s} = a^{i+pjs}b^{j}$. Therefore, $js\equiv 0 \;(\mod\;p)$ for all $j$ and hence, $C_{K}(H) = \{1\}$. This implies that $Hom(H/[H,K],K)$ is the trivial group. Since $K$ is abelian, by \cite[Corollary 2.2, p. 490]{bdw2006} $C$ is the trivial group.
Note that, each $\alpha\in Aut(H)$ defined by $\alpha(a) = a^{i}b^{j}$ and $\alpha(b) = a^{pm}b^{l}$  can be expressed as a matrix $\begin{pmatrix}
i & j\\
m & l
\end{pmatrix}$, where $0\leq i \leq p^2-1, \gcd(p,i) = 1$, $0\le m,j \le p-1$ and $1\leq l \leq p-1$. Also, let $\delta\in Aut(K)\simeq \mathbb{Z}_{p-1}$ be defined by $\delta(c) = c^{r}$, where $1\le r\le p-1$. Now, if $(\alpha, \delta)\in W$, then $(i)$ $\alpha(a^{c}) = \alpha(a)^{\delta(c)}$ and $(ii)$ $\alpha(b^{c}) = \alpha(b)^{\delta(c)}$. By $(i)$, $a^{i}b^{j} = \alpha(a) = \alpha(a^{c}) = \alpha(a)^{\delta(c)} = (a^{i}b^{j})^{c^{r}} = a^{i}a^{prj}b^{j} = a^{i + prj}b^{j}$. Thus, $prj \equiv 0 \;(\mod\;p^{2})$ which implies that $j = 0$. Now, by $(ii)$, $a^{pi+pm}b^{l} = \alpha(a^{p}b) = \alpha(b^{c}) = \alpha(b)^{\delta(c)} = (a^{pm}b^{l})^{c^{r}} = a^{pm}(b^{l})^{c^{r}} = a^{pm}a^{prl}b^{l} = a^{pm+prl}b^{l}$. Thus, $i \equiv rl \;(\mod\; p)$. Let $t$ be a primitive root of $1 \;(\mod\;p)$ and $x = \left( \begin{pmatrix}
t+p & 0\\
0 & t
\end{pmatrix}, \delta_{1}\right), y = \left( \begin{pmatrix}
1 & 0\\
1 & 1
\end{pmatrix}, \delta_{1}\right)$ and $z = \left( \begin{pmatrix}
t+p & 0\\
0 & 1
\end{pmatrix}, \delta_{t}\right)$, where $\delta_{\rho}(c) = c^{\rho}$. Then $W\simeq \langle x,y,z\; |\; x^{p(p-1)} = 1= y^{p}= z^{p(p-1)}, xz=zx, xy = yx, zyz^{-1} = y^{t^{-1}}\rangle$. Therefore, $W \simeq (\mathbb{Z}_{p}\times\mathbb{Z}_{p(p-1)})\rtimes \mathbb{Z}_{p(p-1)}$ and so, $E\simeq (\mathbb{Z}_{p}\times\mathbb{Z}_{p(p-1)})\rtimes \mathbb{Z}_{p(p-1)}$. Hence, by Theorem \ref{t2}, $Aut_{K}(G_3) \simeq  (\mathbb{Z}_{p} \times \mathbb{Z}_{p(p-1)}) \rtimes \mathbb{Z}_{p(p-1)}$.

\vspace{0.2 cm}

\n \textbf{The group $G_4$}. Let $H = \langle a,b \;|\; a^{p^{2}} = b^{p} = 1, ab = ba\rangle$ and $K=\langle c \;|\; c^{p} = 1\rangle$. Then	$G_4 \simeq H\rtimes_{\phi} K$, where $\phi: K \longrightarrow Aut(H)$ is given by $\phi(c)(a) = a^{1+p}$ and $\phi(c)(b) = b$. Note that  $[H,K] = \langle a^{p}\rangle \simeq \mathbb{Z}_{p}$. By the similar argument as in the case $G_3$ above, $C_{K}(H) = \{1\}$. Since $K$ is abelian, by \cite[Corollary 2.2, p. 490]{bdw2006} $C$ is the trivial group. Note that, any $\alpha\in Aut(H)$ defined by, $\alpha(a) = a^{i}b^{j}$ and $\alpha(b) = a^{pm}b^{l}$  can be expressed as a matrix $\begin{pmatrix}
i & j\\
m & l
\end{pmatrix}$, where $0\leq i \leq p^2-1, \gcd(p,i) = 1$, $0\le m,j \le p-1$ and $1\leq l \leq p-1$. Also, let $\delta\in Aut(K)\simeq \mathbb{Z}_{p-1}$ be defined by $\delta(c) = c^{r}$, where $1\le r\le p-1$. Now, if $(\alpha, \delta)\in W$, then $(i)$ $\alpha(a^{c}) = \alpha(a)^{\delta(c)}$ and $(ii)$ $\alpha(b^{c}) = \alpha(b)^{\delta(c)}$. Note that $\alpha(b^{c})=\alpha(b)=a^{pm}b^{l}$ and $\alpha(b)^{\delta(c)} = (a^{pm}b^{l})^{c^{r}} = (a^{pm})^{c^{r}}b^{l} = a^{pm(1+p)^{r}}b^{l} = a^{pm}b^{l}$. Therefore, each $\alpha\in Aut(H)$ satisfies $(ii)$.  Now, by $(i)$, $(a^{i}b^{j})^{1+p} = \alpha(a^{1+p}) = \alpha(a^{c}) = \alpha(a)^{\delta(c)} = (a^{i}b^{j})^{c^{r}} = (a^{i})^{c^{r}}b^{j} = a^{i(1+p)^{r}}b^{j}$. Thus, $i(p+1) \equiv i(1+p)^{r} \;(\mod\;p^{2})$ which implies that $r = 1$. Therefore, $W \simeq Aut(H)\simeq  \mathbb{Z}_{p-1} \times (((\mathbb{Z}_{p} \times \mathbb{Z}_{p})\rtimes\mathbb{Z}_{p})\rtimes \mathbb{Z}_{p-1})$. Hence, $E\simeq \mathbb{Z}_{p-1} \times (((\mathbb{Z}_{p} \times \mathbb{Z}_{p})\rtimes\mathbb{Z}_{p})\rtimes \mathbb{Z}_{p-1})$. Thus, $Aut_{K}(G_4) \simeq  \mathbb{Z}_{p-1} \times (((\mathbb{Z}_{p} \times \mathbb{Z}_{p})\rtimes\mathbb{Z}_{p})\rtimes \mathbb{Z}_{p-1})$.

\vspace{0.2 cm}

\n \textbf{The group $G_5$}. Let $H = \langle b,c \;|\; b^{p} = c^{p} = 1, bc = cb\rangle$ and $K=\langle a \;|\; a^{p^{2}} = 1\rangle$. Then $G_5 \simeq H\rtimes_{\phi} K$, where $\phi: K \longrightarrow Aut(H)$ is given by $\phi(a)(b) = b$ and $\phi(a)(c) = b^{-1}c$.  Note that $[H,K] = \langle b\rangle \simeq \mathbb{Z}_{p}$. Also, if $a^{s}\in C_{K}(H)$, then $b^{i}c^{j} = a^{s}b^{i}c^{j}a^{-s} = b^{i-js}c^{j}$. Therefore, $s\equiv 0 \;(\mod\;p)$ and $C_{K}(H) = \langle a^{p}\rangle$. Since $K$ is abelian, by \cite[Corollary 2.2, p. 490]{bdw2006}, $\gamma(h^k)=\gamma(h)$ is equivalent to $\gamma \in Hom(H/[H,K], K)$. Define $\gamma_{k}\in Hom(H/[H,K],K)$ by $\gamma_{k}(b) = 1$ and $\gamma_{k}(c) = a^{pk}$ for all $0\le k \le p-1$. Since $[H,K]\subseteq Ker \gamma_k$, it will induce a homomorphism from $H/[H,K]$ to $K$. Let $c = \begin{pmatrix}
1 & 0\\
\gamma_{1} & 1
\end{pmatrix}$. Then, one can easily observe that $\gamma_{1}(H)\subseteq C_{K}(H)$. Therefore, $Hom(H/[H,K], K) \simeq C = \langle c\rangle \simeq \mathbb{Z}_{p}$. Note that, any $\alpha\in Aut(H)\simeq GL(2, p)$ defined as, $\alpha(b) = b^{i}c^{j}$ and $\alpha(c) = b^{l}c^{m}$  can be represented as a matrix, $\begin{pmatrix}
i & j\\
l & m
\end{pmatrix}$, where $0\le l,j \le p-1$ and $1 \le i,m \le p-1$. Also, let $\delta\in Aut(K)\simeq \mathbb{Z}_{p(p-1)}$ be defined by $\delta(a) = a^{r}$, where $r\in \mathbb{Z}_{p^{2}}, \gcd(p,r) = 1$. Now, if $(\alpha, \delta)\in W$, then $(i)$ $ \alpha(b^{a}) = \alpha(b)^{\delta(a)}$ and $(ii)$ $\alpha(c^{a}) = \alpha(c)^{\delta(a)}$. By $(i)$, $b^{i}c^{j} = \alpha(b) = \alpha(b^{a}) = \alpha(b)^{\delta(a)} = (b^{i}c^{j})^{a^{r}} = b^{i}b^{-rj}c^{j} = b^{i - rj}c^{j}$. Thus, $rj \equiv 0 \;(\mod\;p)$ which implies that $j = 0$. Now, by $(ii)$, $b^{-i+l}c^{m} = \alpha(b^{-1}c) = \alpha(c^{a}) = \alpha(c)^{\delta(a)} = (b^{l}c^{m})^{a^{r}} = b^{l}(c^{m})^{a^{r}} = b^{l}b^{-rm}c^{m} = b^{l-rm}c^{m}$. Thus, $i \equiv rm \;(\mod\; p)$. Let $t$ be a primitive root of $1 \;(\mod\; p)$ and $x = \left(\begin{pmatrix}
t & 0\\
1 & t
\end{pmatrix}, \delta_{1}\right)$, and $y = \left(\begin{pmatrix}
t+p & 0\\
0 & 1
\end{pmatrix}, \delta_{t}\right)$, where $\delta_{\rho}(a) = a^{\rho}$. Then $W \simeq \langle x,y \;|\; x^{p(p-1)} =1, y^{p(p-1)} = 1, yxy^{-1} = x^{\lambda}\rangle$, where $x^{\lambda} = \begin{pmatrix}
t & 0\\
(t+p)^{-1} & t
\end{pmatrix}$. Then $W\simeq \mathbb{Z}_{p(p-1)}\rtimes \mathbb{Z}_{p(p-1)}$ and so, $E \simeq \mathbb{Z}_{p(p-1)}\rtimes \mathbb{Z}_{p(p-1)}$. Hence, $Aut_{K}(G_5) \simeq \mathbb{Z}_{p} \times (\mathbb{Z}_{p(p-1)}\rtimes \mathbb{Z}_{p(p-1)})$.

\vspace{0.2 cm}

\n \textbf{The group $G_6$}. Let $H = \langle a,b \;|\; a^{p^{2}} = b^{p} = 1, ba = a^{1+p}b\rangle$ and $K=\langle c \;|\; c^{p} = 1\rangle$. Then $G_6 \simeq H\rtimes_{\phi} K$, where $\phi: K \longrightarrow Aut(H)$ is given by $\phi(c)(a) = ab$ and $\phi(c)(b) = b$. Note that $[H,K] = \langle b^{-1},a^{p}\rangle \simeq \mathbb{Z}_{p}\times \mathbb{Z}_{p}$. By the similar argument as in the case $G_3$ above, $C_{K}(H) = \{1\}$ and hence $C$ is the trivial group. Now, $\alpha \in Aut(H)$ as given in \cite{bdw2006} can be expressed as a matrix $\begin{pmatrix}
\eta & \beta\\
\xi & 1
\end{pmatrix}$, where $\eta(a) = a^{i}, 0\leq i\leq p^2-1, \gcd(p,i) = 1$, $\beta(b) = a^{pj}, 0\le j \le p-1$, $\xi(a) = b^{k}$, $0\le k \le p-1$, and $1(b) = b$. Also, $\delta \in Aut(K)$ is given by $\delta(c) = c^{r}, 1\le r \le p-1$. Now, if $(\alpha, \delta)\in W$, then $(i)$ $ \alpha(a^{c}) = \alpha(a)^{\delta(c)}$ and $(ii)$ $\alpha(b^{c}) = \alpha(b)^{\delta(c)}$. Note that $\alpha(b^{c})=\alpha(b)=a^{pj}b$ and $\alpha(b)^{\delta(c)} = (a^{pj}b)^{c^{r}} = (c^{r}ac^{-r})^{pj}b = (ab^{r})^{pj}b = a^{pj+rp\frac{pj(pj-1)}{2}}b^{pjr+1} = a^{pj}b$. Therefore, each $\alpha \in Aut(H)$ satisfies $(ii)$.  Now, by $(i)$, $a^{i+pj}b^{k+1} = \alpha(ab) = \alpha(a^{c}) = \alpha(a)^{\delta(c)} = (a^{i}b^{k})^{c^{r}} = (c^{r}ac^{-r})^{i}b^{k} = (ab^{r})^{i}b^{k} = a^{i+rp\frac{i(i-1)}{2}}b^{ri+k}$. Thus, $ri \equiv 1 \;(\mod\; p)$ which gives that $i \equiv 2j + 1 \;(\mod\; p)$. Therefore, $i\in \{(2j+1)+\lambda p \;|\; \lambda \in \mathbb{Z}_{p}\}$. Let $t$ be a primitive root of $1 \;(\mod\; p)$ and $x = \left(\begin{pmatrix}
t+p & 0\\
0 & 1
\end{pmatrix}, \delta_{t}\right), y = \left(\begin{pmatrix}
	1 & 0\\
	1 & 1
\end{pmatrix}, \delta_{1}\right)$, and $z =\left(\begin{pmatrix}
1+p & 0\\
0 & 1
\end{pmatrix}, \delta_{1}\right)$, where $\delta_{\rho}(a) = a^{\rho}$. Then $W \simeq \langle x,y,z \;|\; x^{p(p-1)} = 1 = y^{p}= z^{p}, xyx^{-1} = y^{e}, xz = zx, yz = zy\rangle$, where $y^{e} = \begin{pmatrix}
1 & 0\\
(t+p)^{-1} & 1
\end{pmatrix}$. Hence, $W \simeq \mathbb{Z}_{p}\times ((\mathbb{Z}_{p}\times \mathbb{Z}_{p})\rtimes \mathbb{Z}_{p-1})$ and so, $E \simeq \mathbb{Z}_{p}\times ((\mathbb{Z}_{p}\times \mathbb{Z}_{p})\rtimes \mathbb{Z}_{p-1})$. Thus, $Aut_{K}(G_6) \simeq  \mathbb{Z}_{p}\times ((\mathbb{Z}_{p}\times \mathbb{Z}_{p})\rtimes \mathbb{Z}_{p-1})$.

\vspace{0.2 cm}

\n \textbf{The group $G_7$}. Let $H = \langle a,b \;|\; a^{p^{2}} = b^{p} = 1, ba = a^{1+p}b\rangle$ and $K=\langle c \;|\; c^{p} = 1\rangle$. Then, $G_7 \simeq H\rtimes_{\phi} K$, where $\phi: K \longrightarrow Aut(H)$ is given by $\phi(c)(a) = a^{1+p}b$ and $\phi(c)(b) = a^{p}b$. Note that $[H,K] = \langle b,a^{p}\rangle \simeq \mathbb{Z}_{p}\times \mathbb{Z}_{p}$. By the similar argument as in the case $G_3$ above, $C_{K}(H) = \{1\}$ and hence $C$ is the trivial group. Each $\alpha \in Aut(H)$ can be expressed as a matrix $\begin{pmatrix}
\eta & \beta\\
\xi & 1
\end{pmatrix}$, where $\eta(a) = a^{i}, 0\leq i\leq p^2-1, \gcd(i,p)=1$, $\beta(b) = a^{pj}, \xi(a) = b^{k}, 0\le j,k \le p-1$, and $1(b) = b$. Also, $\delta \in Aut(K)$ is given by $\delta(c) = c^{r}, 1\le r \le p-1$. Now, if $(\alpha, \delta)\in W$, then $(i)$ $ \alpha(a^{c}) = \alpha(a)^{\delta(c)}$ and $(ii) \alpha(b^{c}) = \alpha(b)^{\delta(c)}$.

\vspace{.2cm}

\n By $(ii)$, $a^{pi + pj}b = \alpha(a^{p}b) = \alpha(b^{c}) = \alpha(b)^{\delta(c)} = (a^{pj}b)^{c^{r}} = (c^{r}ac^{-r})^{pj}(c^{r}bc^{-r}) = (a^{1+p\frac{r(r+1)}{2}}b^{r})^{pj}(a^{rp}b) = a^{p^{2}j\frac{r(r+1)}{2}}(ab^{r})^{pj}a^{rp}b = a^{pj+pr}b$. Thus $i \equiv r \;(\mod\; p)$. Now, by $(i)$, $a^{i(1+p)+pj}b^{k+1} = \alpha(a^{1+p}b) = \alpha(a^{c}) = \alpha(a)^{\delta(c)} = (a^{i}b^{k})^{c^{r}} = (c^{r}ac^{-r})^{i}(c^{r}bc^{-r})^{k} = a^{i+pri\frac{r+i}{2}}b^{ri}(a^{rp}b)^{k} = a^{i+rpi(\frac{r+i}{2})+rpk}b^{ri+k}$. Thus, $ri \equiv 1 \;(\mod\; p)$ and $ip +pj \equiv pri(\frac{r+i}{2})+rpk \;(\mod\; p^{2})$ implies that $i+j\equiv r+rk \;(\mod\; p)$. So, $j \equiv rk \;(\mod\; p)$. Using $r\equiv i \;(\mod\; p)$ and $ri \equiv 1 \;(\mod\; p)$, we get $i^{2}\equiv 1 \;(\mod\; p^{2})$. Let $t$ be a primitive root of $1 \;(\mod\; p)$ and $x = \left(\begin{pmatrix}
1 & 1\\
0 & 1
\end{pmatrix}, \delta_{1}\right), y = \left(\begin{pmatrix}
-1 & 0\\
 0 & 1
\end{pmatrix}, \delta_{1}\right)$ and $z= \left(\begin{pmatrix}
t+p & 0\\
0 & 1
\end{pmatrix}, \delta_{t}\right)$, where $\delta_{\rho}(a) = a^{\rho}$. Then $W \simeq \langle x,y,z \;|\; x^{p} = 1 = y^{2} = z^{p(p-1)}, xy = yx^{-1}, zxz^{-1} = x^{t}, yz=zy\rangle \simeq \mathbb{Z}_{p(p-1)}\times (\mathbb{Z}_{p}\rtimes \mathbb{Z}_{2})$ and so, $E \simeq \mathbb{Z}_{p(p-1)}\times (\mathbb{Z}_{p}\rtimes \mathbb{Z}_{2})$. Hence, $Aut_{K}(G_7) \simeq  \mathbb{Z}_{p(p-1)}\times (\mathbb{Z}_{p}\rtimes \mathbb{Z}_{2}) \simeq  D_{2p}\times \mathbb{Z}_{p(p-1)}$, where $D_{2p}$ is the dihedral group of order $2p$. 

\vspace{0.2 cm}

\n \textbf{The group $G_8$}. Let $H = \langle a,b \;|\; a^{p^{2}} = b^{p} = 1, ba = a^{1+p}b\rangle$ and $K=\langle c \;|\; c^{p} = 1\rangle$. Then $G_8 \simeq H\rtimes_{\phi} K$, where $\phi: K \longrightarrow Aut(H)$ is given by $\phi(c)(a) = a^{1+dp}b$ and $\phi(c)(b) = a^{dp}b$, $d \not\equiv 0,1 \;(\mod\; p)$. By the similar argument as for the group $G_{7}$, we get, $C$ is the trivial group and $E \simeq \mathbb{Z}_{p(p-1)}\times (\mathbb{Z}_{p}\rtimes \mathbb{Z}_{2})$. Hence, $Aut_{K}(G_8) \simeq D_{2p}\times \mathbb{Z}_{p(p-1)}$.

\vspace{0.2 cm}

\n \textbf{The group $G_9$}. Let $H =\langle a,b,c \;|\; a^{p} = b^{p}= c^{p} = 1, ab = ba, bc = cb, ac = ca\rangle$, and $K = \langle d \;|\; d^{p} = 1 \rangle$. Then $G_9 \simeq H\rtimes_\phi K$, where $\phi: K \longrightarrow Aut(H)$ is given by $\phi(d)(a) = a, \phi(d)(b) = b$, and $\phi(d)(c) = ac$. 

\vspace{.2cm}

\n Note that $[H,K] = \langle a \rangle \simeq \mathbb{Z}_{p}$. By the similar argument as in the case $G_3$ above, $C_{K}(H) = \{1\}$ and hence $C$ is the trivial group. Note that, $Aut(H) \simeq GL(3,p)$ and $Aut(K)\simeq \mathbb{Z}_{p-1}$. So, any automorphism $\alpha \in Aut(H)$ can be identified as an element in $GL(3,p)$. Let $\alpha\in Aut(H)$ and $\delta \in Aut(K)$ be defined as, $\alpha(a) = a^{i}b^{j}c^{k}, \alpha(b) = a^{l}b^{m}c^{n}, \alpha(c) = a^{\lambda}b^{\mu}c^{\rho}$, and $\delta(d) = d^{r}$, where $1\le i,m,\rho,r \le p-1$ and $0\le j,k,l,n,\lambda, \mu \le p-1$. Now, if $(\alpha, \delta)\in W$, then $(i)$ $\alpha(a^{d}) = \alpha(a)^{\delta(d)}$, $(ii)$ $\alpha(b^{d}) = \alpha(b)^{\delta(d)}$ and $(iii)$ $\alpha(c^{d}) = \alpha(c)^{\delta(d)}$. 

\vspace{.2cm}

\n Note that, $d^{r}cd^{-r} = a^{r}c$. By $(i)$, $a^{i}b^{j}c^{k} = \alpha(a) = \alpha(a^{d}) = \alpha(a)^{\delta(d)} = (a^{i}b^{j}c^{k})^{d^{r}} = a^{i}b^{j}(d^{r}cd^{-r})^{k} = a^{i}b^{j}(a^{r}c)^{k} = a^{i+rk}b^{j}c^{k}$. Therefore, $rk \equiv 0 (\mod p)$ which implies that $k = 0$. Now, by $(ii)$, $a^{l}b^{m}c^{n} = \alpha(b) = \alpha(b^{d}) = \alpha(b)^{\delta(d)} = (a^{l}b^{m}c^{n})^{d^{r}} = a^{l+rn}b^{m}c^{n}$. Therefore, $rn \equiv 0 (\mod p)$ which implies that $n = 0$. By $(iii)$, $a^{i+\lambda}b^{j+\mu}c^{\rho} = \alpha(ac) = \alpha(c^{d}) = \alpha(c)^{\delta(d)} = (a^{\lambda}b^{\mu}c^{\rho})^{d^{r}} = a^{\lambda+r\rho}b^{\mu}c^{\rho}$. Thus, $i = r\rho$ and $j = 0$. So, we have, $\alpha = \begin{pmatrix}
r\rho & 0 & 0\\
l & m & 0\\
\lambda & \mu & \rho
\end{pmatrix}$. Let $t$ be a primitive root of $1 \;(\mod\; p)$ and $u = \left(\begin{pmatrix}
t & 0 & 0\\
0 & 1 & 0\\
0 & 0 & t
\end{pmatrix}, \delta_{1}\right), v = \left(\begin{pmatrix}
1 & 0 & 0\\
0 & t & 0\\
0 & 0 & 1
\end{pmatrix}, \delta_{1}\right), w = \left(\begin{pmatrix}
t & 0 & 0\\
0 & 1 & 0\\
0 & 0 & 1
\end{pmatrix}, \delta_{t}\right), x= \left(\begin{pmatrix}
1 & 0 & 0\\
1 & 1 & 0\\
0 & 0 & 1
\end{pmatrix}, \delta_{1}\right), y = \left(\begin{pmatrix}
1 & 0 & 0\\
0 & 1 & 0\\
1 & 0 & 1
\end{pmatrix}, \delta_{1}\right), z = \left(\begin{pmatrix}
1 & 0 & 0\\
0 & 1 & 0\\
0 & 1 & 1
\end{pmatrix}, \delta_{1}\right)$, where $\delta_{s}(d) = d^{s}$. Then $W \simeq \langle u,v,w,x,y,z \; |\; u^{p-1} = 1 = v^{p-1}= w^{p-1} = x^{p} = y^{p} = z^{p}, uv=vu, uw=wu, uy=yu, vw=wv, vy=yv, wz=zw, xy=yx, yz=zy, uxu^{-1} = x^{t^{-1}}, uzu^{-1} = z^{t}, vxv^{-1} = x^{t}, vzv^{-1} = z^{t^{-1}}, wxw^{-1} = x^{t^{-1}}, wyw^{-1} = y^{t^{-1}}, zx = xyz\rangle \simeq (((\mathbb{Z}_{p}\times \mathbb{Z}_{p})\rtimes \mathbb{Z}_{p})\times \mathbb{Z}_{p-1})\rtimes (\mathbb{Z}_{p-1}\times \mathbb{Z}_{p-1})$ and so, $E \simeq (((\mathbb{Z}_{p}\times \mathbb{Z}_{p})\rtimes \mathbb{Z}_{p})\times \mathbb{Z}_{p-1})\rtimes (\mathbb{Z}_{p-1}\times \mathbb{Z}_{p-1})$. Hence, $Aut_{K}(G_9) \simeq (((\mathbb{Z}_{p}\times \mathbb{Z}_{p})\rtimes \mathbb{Z}_{p})\times \mathbb{Z}_{p-1})\rtimes (\mathbb{Z}_{p-1}\times \mathbb{Z}_{p-1})$.

\vspace{0.2 cm}

\n \textbf{The group $G_{10}$}.	Let $H =\langle a,b,c \;|\; a^{p} = b^{p}= c^{p} = 1, ab = ba, bc = cb, ac = ca\rangle$ and $K = \langle d \;|\; d^{p} = 1 \rangle$. Then, $G_{10} \simeq H\rtimes_\phi K$, where $\phi: K \longrightarrow Aut(H)$ is given by $\phi(d)(a) = a, \phi(d)(b) = ab$, and $\phi(d)(c) = bc$. 

\vspace{.2cm}

\n Note that $[H,K] = \langle a,b \rangle \simeq \mathbb{Z}_{p}\times \mathbb{Z}_{p}$. By the similar argument as above $C$ is the trivial group. Note that, $Aut(H) \simeq GL(3,p)$ and $Aut(K)\simeq \mathbb{Z}_{p-1}$. So, any automorphism $\alpha \in Aut(H)$ can be identified as an element in $GL(3,p)$. Let $\alpha\in Aut(H)$ and $\delta \in Aut(K)$ be defined as, $\alpha(a) = a^{i}b^{j}c^{k}, \alpha(b) = a^{l}b^{m}c^{n}, \alpha(c) = a^{\lambda}b^{\mu}c^{\rho}$, and $\delta(d) = d^{r}$, where $1\le i,m,\rho,r \le p-1$ and $0\le j,k,l,n,\lambda, \mu \le p-1$. Now, if $(\alpha, \delta)\in W$, then $(i)$ $\alpha(a^{d}) = \alpha(a)^{\delta(d)}$, $(ii)$ $\alpha(b^{d}) = \alpha(b)^{\delta(d)}$ and $(iii)$ $\alpha(c^{d}) = \alpha(c)^{\delta(d)}$. 

\vspace{.2cm}

\n Note that, $d^{r}bd^{-r} = a^{r}b$ and $d^{r}cd^{-r} = a^{\frac{r(r-1)}{2}}b^{r}c$. By $(i)$, $a^{i}b^{j}c^{k} = \alpha(a) = \alpha(a^{d}) = \alpha(a)^{\delta(d)} = (a^{i}b^{j}c^{k})^{d^{r}} = a^{i}(d^{r}bd^{-r})^{j}(d^{r}cd^{-r})^{k} = a^{i}(a^{r}b)^{j}\\(a^{\frac{r(r-1)}{2}}b^{r}c)^{k} = a^{i+rj+k\frac{r(r-1)}{2}}b^{j+rk}c^{k}$. Thus $k = 0$ and $j = 0$. Now, by $(ii)$, $a^{i+l}b^{m}c^{n} = \alpha(ab) = \alpha(b^{d}) = \alpha(b)^{\delta(d)} = (a^{l}b^{m}c^{n})^{d^{r}} = a^{l+rm+n\frac{r(r-1)}{2}}b^{m+rn}c^{n}$. Thus, $n = 0$ and $i = rm$. By $(iii)$, $a^{l+\lambda}b^{m+\mu}c^{\rho} = \alpha(bc) = \alpha(c^{d}) = \alpha(c)^{\delta(d)} = (a^{\lambda}b^{\mu}c^{\rho})^{d^{r}} = a^{\lambda+r\mu+\rho\frac{r(r-1)}{2}}b^{\mu+r\rho}c^{\rho}$. Thus, $m = r\rho$ and $l = r\mu+\rho\frac{r(r-1)}{2} \;(\mod\; p)$. So, we have, $\alpha = \begin{pmatrix}
r^{2}\rho & 0 & 0\\
l & r\rho & 0\\
\lambda & \mu & \rho
\end{pmatrix}$, where $l = r\mu+\rho\frac{r(r-1)}{2} \;(\mod\; p)$. Let $t$ be a primitive root of $1 \;(\mod\; p)$ and $x = \left(\begin{pmatrix}
t & 0 & 0\\
0 & t & 0\\
0 & 0 & t
\end{pmatrix}, \delta_{1}\right), y = \left(\begin{pmatrix}
t^{2} & 0 & 0\\
0 & t & 0\\
0 & 0 & 1
\end{pmatrix}, \delta_{t}\right)$ and $z = \left(\begin{pmatrix}
1 & 0 & 0\\
1 & 1 & 0\\
1 & 1 & 1
\end{pmatrix}, \delta_{1}\right)$, where $\delta_{s}(d) = d^{s}$. Note that, $\langle z \rangle = \langle \begin{pmatrix}
1 & 0 & 0\\
1 & 1 & 0\\
 0& 1 & 1
\end{pmatrix}, \begin{pmatrix}
1 & 0 & 0\\
0 & 1 & 0\\
1 & 0 & 1
\end{pmatrix}\rangle$ is an abelian group of order $p^{2}$. Therefore, $W \simeq \langle x,y,z \;|\; x^{p-1} = y^{p-1} = z^{p}, xy= yx, xz=zx, yzy^{-1} = z^{u}\rangle$, where $z^{u} = \begin{pmatrix}
1 & 0 & 0\\
t^{-1} & 1 & 0\\
t^{-2} & t^{-1} & 1
\end{pmatrix}$. Thus $W\simeq (\mathbb{Z}_{p}\times \mathbb{Z}_{p})\rtimes (\mathbb{Z}_{p-1}\times \mathbb{Z}_{p-1})$ and so, $E \simeq (\mathbb{Z}_{p}\times \mathbb{Z}_{p})\rtimes (\mathbb{Z}_{p-1}\times \mathbb{Z}_{p-1})$.  Hence, $Aut_{K}(G_{10}) \simeq \mathbb{Z}_{p-1}\times ((\mathbb{Z}_{p}\times \mathbb{Z}_{p})\rtimes \mathbb{Z}_{p-1})$.

	\section*{Acknowledgements}
	The first author is supported by the Junior Research Fellowship of UGC, India.
	
\end{document}